\NewDocumentCommand{\eulerian}{omm}
 {%
  \genfrac<>{0pt}{}{#2}{#3}%
  \IfValueT{#1}{_{\!#1}}%
 }
\newcommand*\pFq[6][8]{%
  \begingroup 
  \pFqmuskip=#1mu\relax
  \mathchardef\normalcomma=\mathcode`,
  \mathcode`\,=\string"8000
  \begingroup\lccode`\~=`\,
  \lowercase{\endgroup\let~}\pFqcomma
  {}_{#2}F_{#3}{\left(\genfrac..{0pt}{}{#4}{#5}\bigg|#6\right)}%
  \endgroup
}
\newcommand{\pFqcomma}{{\normalcomma}\mskip\pFqmuskip}
\newtheorem{theorem}{Theorem}
\newtheorem{corollary}[theorem]{Corollary}
\newtheorem{proposition}[theorem]{Proposition}
\begin{document}

\title[Some identities involving degenerate $r$-Stirling numbers]{Some identities involving degenerate $r$-Stirling numbers}

\author{Taekyun  Kim}
\address{Department of Mathematics, Kwangwoon University, Seoul 139-701, Republic of Korea}
\email{tkkim@kw.ac.kr}

\author{DAE SAN KIM}
\address{Department of Mathematics, Sogang University, Seoul 121-742, Republic of Korea}
\email{dskim@sogang.ac.kr}

\subjclass[2010]{11B73; 11B83}
\keywords{degenerate $r$-Stirling numbers of the second kind; unsigned degenerate $r$-Stirling numbers of the first kind; inverse relation}

\maketitle

\begin{abstract}
Recently, Kim-Kim studied the unsigned degenerate $r$-Stirling number of the first kind and the degenerate $r$-Stirling number of the second kind, respectively of which are the degenerate versions of the unsigned $r$-Stirling numbers of the first kind and those of the $r$-Stirling numbers of the second kind. The aim of this paper is to derive some identities involving such special numbers from the inverse relations for them. 
\end{abstract}

\section{Introduction}
Carlitz initiated a study of degenerate versions of some special polynomials and numbers, namely the degenerate Bernoulli and Euler polynomials and numbers (see [3]). In recent years, we have witnessed that some mathematicians have explored various degenerate versions of many special polynomials and numbers by using various tools like combinatorial methods, generating functions, differential equations, umbral calculus techniques, $p$-adic analysis, special functions, operator theory, probability theory, and analytic number theory. These degenerate versions include the degenerate Stirling numbers of the first and second kinds, which appear very frequently when we study degenerate versions of some special numbers and polynomials (see [5-11]). \par
The unsigned $r$-Stirling number of the first kind ${n \brack k}_{r}$ is the number of permutations of the set $[n]=\{1,2,3,\dots,n\}$ with exactly $k$ disjoint cycles in such a way that the numbers $1,2,\dots,r$ are in distinct cycles, while the $r$-Stirling number of the second kind ${n \brace k}_{r}$ counts the number of partitions of the set $[n]$ into $k$ non-empty disjoint subsets in such a way that the numbers $1,2,\dots,r$ are in distinct subsets. We remark that Border [2] studied the combinatorial and algebraic properties of the $r$-Stirling numbers. \par
The unsigned degenerate $r$-Stirling numbers of the first kind ${n \brack k}_{r,\lambda}$ are degenerate versions of the unsigned $r$-Stirling numbers of the first kind ${n \brack k}_{r}$ and the degenerate $r$-Stirling numbers of the second kind ${n \brace k}_{r,\lambda}$ are degenerate versions of the $r$-Stirling numbers of the second kind ${n \brace k}_{r}$. They can be viewed also as natural extensions of the unsigned degenerate Stirling numbers of the first kind ${n \brack k}_{\lambda}$ and the degenerate Stirling numbers of the second kind ${n \brace k}_{\lambda}$ which were introduced earlier (see [6,10,11]). The aim of this paper is to derive from the inverse relations for the degenerate $r$-Stirling numbers some identities involving such numbers and some special numbers which are given by the evaluations at $r$ of the fully degenerate Bernoulli polynomials, the degenerate two variable Fubini polynomials, the degenerate Euler polynomials and the degenerate poly-Bernoulli polynomials. \par
The outline of this paper is as follows. In Section 1, we recall the degenerate exponential functions, the degenerate logarithms, the degenerate Bernoulli polynomials, the fully degenerate Bernoulli polynomials, the degenerate harmonic numbers, the unsigned degenerate Stirling numbers of the first kind, the degenerate Stirling numbers of the second kind, the unsigned degenerate $r$-Stirling numbers of the first kind, and the degenerate $r$-Stirling numbers of the second kind. In Section 2, we derive the orthogonality relations between the unsigned degenerate $r$-Stirling numbers of the first kind and the degenerate $r$-Stirling numbers of the second kind, and thereby get their inverse relations. By invoking these inverse relations we show some identities involving such degenerate $r$-Stirling numbers and some special numbers which are given by the evaluations at $r$ of aforementioned special polynomials. \par
For any $\lambda\in\mathbb{R}$, the degenerate falling factorial sequence is defined by 
\begin{equation}
(x)_{0,\lambda}=1,\quad (x)_{n,\lambda}=x(x-\lambda)(x-2\lambda)\cdots(x-(n-1)\lambda),\quad (n\ge 1).\label{1}
\end{equation}
Note that $\displaystyle\lim_{\lambda\rightarrow 0}(x)_{n,\lambda}=x^{n},\displaystyle$ (see [7-9]). The degenerate exponential functions are defined by 
\begin{equation}
e_{\lambda}^{x}(t)=\sum_{n=0}^{\infty}(x)_{n,\lambda}\frac{t^{n}}{n!},\quad(\mathrm{see}\ [6,10,11]).\label{2}
\end{equation}
For $x=1$, we let $e_{\lambda}(t)=e_{\lambda}^{1}(t)$. \par 
Let $\log_{\lambda}t$ be the degenerate logarithm which is the compositional inverse of $e_{\lambda}(t)$. Then we have 
\begin{equation}
\log_{\lambda}(1+t)=\sum_{n=1}^{\infty}\frac{\lambda^{n-1}(1)_{n,1/\lambda}}{n!}t^{n},\quad (\mathrm{see}\ [6]).\label{3}	
\end{equation}
Note that $e_{\lambda}\big(\log_{\lambda}(1+t)\big)=\log_{\lambda}\big(e_{\lambda}(1+t)\big)=1+t$. \par 
Carlitz considered the degenerate Bernoulli polynomials given by 
\begin{equation}
\frac{t}{e_{\lambda}(t)-1}e_{\lambda}^{x}(t)=\sum_{n=0}^{\infty}B_{n,\lambda}(x)\frac{t^{n}}{n!},\quad (\mathrm{see}\ [3]). \label{4}	
\end{equation}
Note that $\displaystyle\lim_{\lambda\rightarrow 0}B_{n,\lambda}(x)=B_{n}(x)\displaystyle$, where $B_{n}(x)$ are the ordinary Bernoulli polynomials given by
\begin{displaymath}
	\frac{t}{e^{t}-1}e^{xt}=\sum_{n=0}^{\infty}B_{n}(x)\frac{t^{n}}{n!},\quad (\mathrm{see}\ [1-13]).
\end{displaymath}
Kim-Kim considered the fully degenerate Bernoulli polynomials $\beta_{n,\lambda}(x)$ which are given by 
\begin{equation}
\frac{\log(1+\lambda t)}{\lambda(e_{\lambda}(t)-1)}e_{\lambda}^{x}(t)=\sum_{n=0}^{\infty}\beta_{n,\lambda}(x)\frac{t^{n}}{n!},\quad (\mathrm{see}\ [9]).\label{5}
\end{equation}
Note that $\displaystyle \lim_{\lambda\rightarrow 0}\beta_{n,\lambda}(x)=B_{n}(x),\ (n\ge 0)\displaystyle$. \par 
The degenerate harmonic numbers are defined by Kim-Kim as
\begin{equation}
H_{0,\lambda}=1,\quad H_{n,\lambda}=\sum_{k=1}^{n}\frac{1}{\lambda}\binom{\lambda}{k}(-1)^{k-1},\quad (n\ge 1),\quad (\mathrm{see}\ [8]).\label{6}
\end{equation}
Note that $\displaystyle\lim_{\lambda\rightarrow 0}H_{n,\lambda}=H_{n}=1+\frac{1}{2}+\cdots+\frac{1}{n}\displaystyle$. \par 
From \eqref{6}, we have 
\begin{equation}
-\frac{1}{1-t}\log_{\lambda}(1-t)=\sum_{n=1}^{\infty}H_{n,\lambda}t^{n},\quad (\mathrm{see}\ [8]).\label{7}
\end{equation}
The degenerate Stirling numbers of the first kind are defined by 
\begin{equation}
(x)_{n}=\sum_{k=0}^{n}S_{1,\lambda}(n,k)(x)_{k,\lambda},\quad (n\ge 0),\quad (\mathrm{see}\ [6]).\label{8}
\end{equation}
Note that $\displaystyle\lim_{\lambda\rightarrow 0}S_{1,\lambda}(n,k)=S_{1}(n,k)\displaystyle$ are the Stirling numbers of the first kind.
The unsigned degenerate Stirling numbers of the first kind are defined by Kim-Kim as
\begin{equation}
\langle x\rangle_{n}=\sum_{k=0}^{n}{n \brack k}_{\lambda}\langle x\rangle_{k,\lambda},\quad (n\ge 0),\quad (\mathrm{see}\ [6,10]), \label{9}
\end{equation}
where 
\begin{align*}
&\langle x\rangle_{0}=1,\quad \langle x\rangle_{n}=x(x+1)\cdots(x+n-1), \\
&\langle x\rangle_{0,\lambda}=1,\quad \langle x\rangle_{n,\lambda}=x(x+\lambda)\cdots(x+(n-1)\lambda),\quad (n\ge 1). \end{align*}
Note that $\displaystyle{n \brack k}_{\lambda}=(-1)^{n-k}S_{1,\lambda}(n,k),\ (n,k)\ge 0\displaystyle$.\par 
As the inversion formula of \eqref{8}, the degenerate Stirling numbers of the second kind are defined by Kim-Kim as
\begin{equation}
(x)_{n,\lambda}=\sum_{k=0}^{n}{n \brace k}_{\lambda}(x)_{k},\quad (n\ge 0),\quad (\mathrm{see}\ [6]).\label{10}	
\end{equation}
 Note that $\displaystyle\lim_{\lambda\rightarrow 0}{n \brace k}_{\lambda}={n \brace k}\displaystyle$ are the Stirling numbers of the second kind which are defined by 
 \begin{equation}
 x^{n}=\sum_{k=0}^{n}{n \brace k}(x)_{k},\quad (n\ge 0),\quad (\mathrm{see}\ [12]). \label{11}
  \end{equation}
From \eqref{9} and \eqref{10}, we note that 
\begin{equation}
\frac{1}{k!}\big(-\log_{\lambda}(1-t)\big)^{k}=\sum_{n=k}^{\infty}{n \brack k}_{\lambda}	\frac{t^{n}}{n!},\quad (k\ge 0), \label{12}
\end{equation}
and 
\begin{equation}
\frac{1}{k!}\big(e_{\lambda}(t)-1\big)^{k}=\sum_{n=k}^{\infty}{n \brace k}_{\lambda}\frac{t^{n}}{n!},\quad (\mathrm{see}\ [6]). \label{13}
\end{equation}
Let $r$ be the nonnegative integer. Then the unsigned degenerate $r$-Stirling numbers of the first kind are defined by Kim-Kim as 
\begin{equation}
\langle x+r\rangle_{n}=\sum_{k=0}^{n}{n+r \brack k+r}_{r,\lambda}\langle x\rangle_{k,\lambda},\quad (n\ge 0),\quad (\mathrm{see}\ [6,10,11]).\label{14}
\end{equation}
From \eqref{14}, we have 
\begin{equation}
\frac{1}{(1-t)^{r}}\frac{1}{k!}\big(-\log_{\lambda}(1-t)\big)^{k}=\sum_{n=k}^{\infty}{n+r \brack k+r}_{r,\lambda}\frac{t^{n}}{n!}. \label{15}	
\end{equation}
Note that $\displaystyle \lim_{\lambda\rightarrow 0}{n+r\brack k+r}_{r,\lambda}={n+r \brack k+r}_{r}\displaystyle$ are the unsigned $r$-Stirling numbers of the first kind which are defined by 
\begin{displaymath}
\frac{1}{(1-t)^{r}}\frac{1}{k!}	\big(-\log(1-t)\big)^{k}=\sum_{n=k}^{\infty}{n+r\brack k+r}_{r}\frac{t^{n}}{n!},\quad (\mathrm{see}\ [10]). 
\end{displaymath}
In view of \eqref{10}, the degenerate $r$-Stirling numbers of the second kind are defined by 
\begin{equation}
(x+r)_{n,\lambda}=\sum_{k=0}^{n}{n+r \brace k+r}_{r,\lambda}(x)_{k},\quad (\mathrm{see}\ [6,10,11]). \label{16}
\end{equation}
By \eqref{16}, we easily get 
\begin{equation}
e_{\lambda}^{r}(t)\frac{1}{k!}\big(e_{\lambda}(t)-1\big)^{k}=\sum_{n=k}^{\infty}{n+r \brace k+r}_{r,\lambda}\frac{t^{n}}{n!},\quad (\mathrm{see}\ [6,10,11]).\label{17}	
\end{equation}

\section{Some identities involving degenerate $r$-Stirling numbers}
In this section, we show orthogonality and inverse relations for the unsigned degenerate $r$-Stirling numbers of the first kind and the degenerate $r$-Stirling numbers of the second kind. Then, by using such inverse relations, we derive some identites involving the degenerate $r$-Stirling numbers and some special numbers which are given by the evaluations at $r$ of several special polynomials.\par
From \eqref{14}, we note that 
\begin{align}
\langle x+r\rangle_{n}&=\sum_{k=0}^{n}{n+r \brack k+r}_{r,\lambda}\langle x\rangle_{k,\lambda}= \sum_{k=0}^{n}{n+r \brack k+r}_{r,\lambda}(-1)^{k}(-x)_{k,\lambda}\label{18}\\
&= \sum_{k=0}^{n}{n+r \brack k+r}_{r,\lambda}(-1)^{k}(-x-r+r)_{k,\lambda} \nonumber \\
&= \sum_{k=0}^{n}{n+r \brack k+r}_{r,\lambda}(-1)^{k}\sum_{j=0}^{k}{k+r \brace j+r}_{r,\lambda}(-x-r)_{j}\nonumber \\
&= \sum_{k=0}^{n}\sum_{j=0}^{k}{n+r \brack k+r}_{r,\lambda}{k+r\brace j+r}_{r,\lambda}(-1)^{k-j}\langle x+r\rangle_{j}
\nonumber \\
&=\sum_{j=0}^{n}\bigg(\sum_{k=j}^{n}{n+r \brack k+r}_{r,\lambda}{k+r \brace j+r}_{r,\lambda}(-1)^{k-j}\bigg)\langle x+r\rangle_{j}.\nonumber
\end{align}
By comparing the coefficients on both sides of \eqref{18}, we obtain the first orthogonality relation of the following theorem. The proof of the second one is similar.
\begin{theorem}
For $n\ge 0$, we have the following orthogonality relations:
\begin{align*}
&\sum_{k=j}^{n}(-1)^{n-k}{n+r \brack k+r}_{r,\lambda}{k+r \brace j+r}_{r,\lambda}=\delta_{n,j},\\
&\sum_{k=j}^{n}(-1)^{k-j}{n+r\brace k+r}_{r,\lambda}{k+r\brack j+r}_{r,\lambda}=\delta_{n,j},
\end{align*}
where $\delta_{n,j}$ is the Kronecker's delta.
\end{theorem}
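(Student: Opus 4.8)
The first orthogonality relation is, in effect, already contained in the computation \eqref{18}. The rising factorials $\langle x+r\rangle_{0},\langle x+r\rangle_{1},\langle x+r\rangle_{2},\dots$ are polynomials in $x$ that are monic of degrees $0,1,2,\dots$ respectively, hence they form a basis of the space of polynomials; in particular, the expansion of a polynomial as a linear combination of them is unique. So I would simply compare, for each $j$, the coefficient of $\langle x+r\rangle_{j}$ on the two sides of the last line of \eqref{18}, which gives
\begin{equation*}
\sum_{k=j}^{n}(-1)^{k-j}{n+r \brack k+r}_{r,\lambda}{k+r \brace j+r}_{r,\lambda}=\delta_{n,j}.
\end{equation*}
When $n=j$ the only surviving term is the one with $k=j$, for which $(-1)^{k-j}=(-1)^{n-k}=1$, while for $n\neq j$ both sides vanish; hence $(-1)^{k-j}$ may be replaced by $(-1)^{n-k}$ without changing the identity, and the first relation follows.

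For the second relation the plan is to run the mirror image of \eqref{18}, now starting from \eqref{16} in place of \eqref{14}. Writing $(x)_{k}=(-1)^{k}\langle -x\rangle_{k}=(-1)^{k}\langle -x-r+r\rangle_{k}$, expanding $\langle -x-r+r\rangle_{k}$ by \eqref{14} applied with $x$ replaced by $-x-r$ (legitimate, since both \eqref{14} and \eqref{16} are identities of polynomials in $x$), and then using $\langle -x-r\rangle_{j,\lambda}=(-1)^{j}(x+r)_{j,\lambda}$, one arrives at
\begin{align*}
(x+r)_{n,\lambda}&=\sum_{k=0}^{n}{n+r \brace k+r}_{r,\lambda}(x)_{k}=\sum_{k=0}^{n}(-1)^{k}{n+r \brace k+r}_{r,\lambda}\langle -x-r+r\rangle_{k}\\
&=\sum_{k=0}^{n}(-1)^{k}{n+r \brace k+r}_{r,\lambda}\sum_{j=0}^{k}{k+r \brack j+r}_{r,\lambda}\langle -x-r\rangle_{j,\lambda}\\
&=\sum_{k=0}^{n}\sum_{j=0}^{k}(-1)^{k-j}{n+r \brace k+r}_{r,\lambda}{k+r \brack j+r}_{r,\lambda}(x+r)_{j,\lambda}\\
&=\sum_{j=0}^{n}\bigg(\sum_{k=j}^{n}(-1)^{k-j}{n+r \brace k+r}_{r,\lambda}{k+r \brack j+r}_{r,\lambda}\bigg)(x+r)_{j,\lambda}.
\end{align*}
Since $(x+r)_{0,\lambda},(x+r)_{1,\lambda},(x+r)_{2,\lambda},\dots$ are again monic polynomials of degrees $0,1,2,\dots$ and hence a basis of the space of polynomials, comparing the coefficient of $(x+r)_{j,\lambda}$ on the two sides yields the second orthogonality relation.

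I do not expect a genuine obstacle here: in both parts the argument is just a change of basis followed by the uniqueness of coordinates in that basis, exactly as in \eqref{18}. The only point that requires care is the bookkeeping of the several sign changes incurred in passing between the falling factorials $(\cdot)_{k}$, $(\cdot)_{k,\lambda}$ and the rising factorials $\langle\cdot\rangle_{k}$, $\langle\cdot\rangle_{k,\lambda}$, and in keeping track of the index ranges when the order of summation is interchanged, $\sum_{k}\sum_{j}\to\sum_{j}\sum_{k}$; once this is organized correctly, both identities drop out immediately.
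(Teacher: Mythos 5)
Your proposal is correct and follows essentially the same route as the paper: the first relation is read off from the expansion \eqref{18} by uniqueness of coefficients with respect to the basis $\{\langle x+r\rangle_{j}\}_{j\ge 0}$, and the second is the mirror computation starting from \eqref{16}, which the paper leaves implicit as ``similar.'' Your observation that $(-1)^{k-j}$ may be replaced by $(-1)^{n-k}$ because the two sums differ by the overall factor $(-1)^{n-j}$ (harmless when $n=j$ and irrelevant when the sum vanishes) is the one small point the paper glosses over, and you handle it correctly.
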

\begin{theorem}
For $n\ge 0$, we have the following inverse relations:
\begin{displaymath}
a_{n,\lambda}=\sum_{k=0}^{n}(-1)^{n-k}{n+r \brack k+r}_{r,\lambda}b_{k,\lambda}\ \Longleftrightarrow\ b_{k,\lambda}=\sum_{k=0}^{n}{n+r \brace k+r}_{r,\lambda}a_{k,\lambda}.
\end{displaymath}	
\end{theorem}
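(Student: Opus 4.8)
The plan is to deduce both implications directly from the two orthogonality relations established in Theorem~1, so no new generating-function manipulation is needed; the argument is a standard ``substitute and collapse'' computation. I would first note that the displayed right-hand relation should read $b_{n,\lambda}=\sum_{k=0}^{n}{n+r \brace k+r}_{r,\lambda}a_{k,\lambda}$ (the running index on the left is $n$, not $k$), and prove the equivalence in that form.

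For the forward direction, I would assume $a_{n,\lambda}=\sum_{k=0}^{n}(-1)^{n-k}{n+r \brack k+r}_{r,\lambda}b_{k,\lambda}$ for all $n\ge 0$, plug this into the candidate sum $\sum_{k=0}^{n}{n+r \brace k+r}_{r,\lambda}a_{k,\lambda}$, and obtain a double sum $\sum_{k=0}^{n}\sum_{j=0}^{k}(-1)^{k-j}{n+r \brace k+r}_{r,\lambda}{k+r \brack j+r}_{r,\lambda}b_{j,\lambda}$. Interchanging the order of summation (so that $j$ runs from $0$ to $n$ and $k$ from $j$ to $n$) and invoking the \emph{second} orthogonality relation of Theorem~1, namely $\sum_{k=j}^{n}(-1)^{k-j}{n+r\brace k+r}_{r,\lambda}{k+r\brack j+r}_{r,\lambda}=\delta_{n,j}$, collapses the inner sum and leaves exactly $b_{n,\lambda}$.

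For the converse, I would assume $b_{n,\lambda}=\sum_{k=0}^{n}{n+r \brace k+r}_{r,\lambda}a_{k,\lambda}$, substitute into $\sum_{k=0}^{n}(-1)^{n-k}{n+r \brack k+r}_{r,\lambda}b_{k,\lambda}$, swap the summation order in the resulting double sum, and apply the \emph{first} orthogonality relation $\sum_{k=j}^{n}(-1)^{n-k}{n+r \brack k+r}_{r,\lambda}{k+r \brace j+r}_{r,\lambda}=\delta_{n,j}$ to recover $a_{n,\lambda}$.

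Since both halves reduce to a single interchange of finite sums followed by one application of Theorem~1, there is no genuine obstacle here; the only point requiring care is bookkeeping the index ranges correctly when switching the order of summation (the inner index must be restricted to $k\ge j$ after the swap so that the orthogonality relation applies verbatim). I would present the two directions in parallel, displaying the double sums and the collapsing step explicitly, and remark that the equivalence is precisely the matrix statement that the lower-triangular arrays $\big((-1)^{n-k}{n+r\brack k+r}_{r,\lambda}\big)$ and $\big({n+r\brace k+r}_{r,\lambda}\big)$ are mutually inverse.
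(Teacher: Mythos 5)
Your proof is correct and follows essentially the same route as the paper's: both directions substitute one relation into the other, interchange the order of the resulting finite double sum, and collapse the inner sum via the appropriate orthogonality relation of Theorem~1 (the second relation for the forward implication, the first for the converse). Your remark that the left-hand side of the second displayed relation should read $b_{n,\lambda}$ rather than $b_{k,\lambda}$ correctly identifies a typo in the statement.
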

\begin{proof}
$(\Longrightarrow)$ Assume that 
\begin{displaymath}
a_{n,\lambda}=\sum_{k=0}^{n}(-1)^{n-k}{n+r\brack k+r}_{r,\lambda}b_{k,\lambda}.
\end{displaymath}	
Then we have 
\begin{align*}
\sum_{k=0}^{n}{n+r \brace k+r}_{r,\lambda}a_{k,\lambda}&	= \sum_{k=0}^{n}{n+r \brace k+r}_{r,\lambda}(-1)^{k}\sum_{j=0}^{k}(-1)^{j}{k+r \brack j+r}_{r,\lambda}b_{j,\lambda} \\
&=\sum_{j=0}^{n}b_{j,\lambda}\sum_{k=j}^{n}(-1)^{k-j} {n+r \brace k+r}_{r,\lambda}{k+r \brack j+r}_{r,\lambda}\\
&=b_{n,\lambda}.
\end{align*}
$(\Longleftarrow)$ Let $\displaystyle b_{n,\lambda}=\sum_{k=0}^{n}{n+r \brace k+r}_{r,\lambda}a_{k,\lambda}\displaystyle$. Then we note that 
\begin{align*}
\sum_{k=0}^{n}(-1)^{n-k}{n+r \brack k+r}_{r,\lambda}b_{k,\lambda}&= \sum_{k=0}^{n}(-1)^{n-k}{n+r \brack k+r}_{r,\lambda}\sum_{j=0}^{k}{k+r \brace j+r}_{r,\lambda}a_{j,\lambda} \\
&=\sum_{j=0}^{n}a_{j,\lambda}\sum_{k=j}^{n}(-1)^{n-k}{n+r \brack k+r}_{r,\lambda}{k+r \brace j+r}_{r,\lambda}\\
&=a_{n,\lambda}.	
\end{align*}
\end{proof}
In [9], it is shown that 
\begin{equation}
\beta_{n,\lambda}(r)=\sum_{k=0}^{n}(-1)^{k}{n+r \brace k+r}_{r,\lambda}\frac{k!}{k+1}.\label{19}
\end{equation}
Therefore, by Theorem 2 and \eqref{19}, we obtain the following corollary.
\begin{corollary}
	For $n\ge 0$, we have 
	\begin{displaymath}
		\frac{n!}{n+1}=\sum_{k=0}^{n}(-1)^{k}{n+r \brack k+r}_{r,\lambda}\beta_{k,\lambda}(r).
	\end{displaymath}
\end{corollary}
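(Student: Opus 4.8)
The plan is to read the identity \eqref{19} as precisely one side of the equivalence furnished by Theorem 2 and then read off the other side. First I would set $b_{k,\lambda}=\beta_{k,\lambda}(r)$ and $a_{k,\lambda}=(-1)^{k}\frac{k!}{k+1}$ for all $k\ge 0$. With this choice, \eqref{19} asserts exactly that $b_{n,\lambda}=\sum_{k=0}^{n}{n+r \brace k+r}_{r,\lambda}a_{k,\lambda}$, which is the right-hand condition appearing in Theorem 2.

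Consequently, by Theorem 2 the left-hand condition holds as well, namely $a_{n,\lambda}=\sum_{k=0}^{n}(-1)^{n-k}{n+r \brack k+r}_{r,\lambda}b_{k,\lambda}$. Substituting back the definitions of $a_{n,\lambda}$ and $b_{k,\lambda}$ gives $(-1)^{n}\frac{n!}{n+1}=\sum_{k=0}^{n}(-1)^{n-k}{n+r \brack k+r}_{r,\lambda}\beta_{k,\lambda}(r)$. Writing $(-1)^{n-k}=(-1)^{n}(-1)^{k}$ and cancelling the common factor $(-1)^{n}$ on both sides then yields the asserted identity $\frac{n!}{n+1}=\sum_{k=0}^{n}(-1)^{k}{n+r \brack k+r}_{r,\lambda}\beta_{k,\lambda}(r)$.

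The only point that requires a moment of care is the sign bookkeeping: one must check that the factor $(-1)^{k}$ attached to $\frac{k!}{k+1}$ in \eqref{19} is exactly what allows the passage $(-1)^{n-k}\mapsto(-1)^{n}(-1)^{k}$ to collapse the sign into a single global $(-1)^{n}$, which then cancels on both sides. I do not expect any substantive obstacle here: once \eqref{19} is recognized as an instance of the ``$\Longleftarrow$'' data in Theorem 2, the corollary is immediate and no generating-function manipulation is needed.
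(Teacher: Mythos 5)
Your proposal is correct and is exactly the argument the paper intends: the paper simply says the corollary follows "by Theorem 2 and \eqref{19}," and your choice $b_{k,\lambda}=\beta_{k,\lambda}(r)$, $a_{k,\lambda}=(-1)^{k}\frac{k!}{k+1}$ together with the cancellation of the global sign $(-1)^{n}$ is the intended instantiation of the inverse relation. No issues.
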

In [5], the degenerate two variable Fubini polynomials are given by 
\begin{equation}
\frac{1}{1-x(e_{\lambda}(t)-1)}e_{\lambda}^{y}(t)=\sum_{n=0}^{\infty}F_{n,\lambda}(x|y)\frac{t^{n}}{n!}.\label{20}	
\end{equation}
We observe that 
\begin{align}
\frac{1}{1-x(e_{\lambda}(t)-1)}e_{\lambda}^{r}(t)&=\sum_{k=0}^{\infty}x^{k}k!	\frac{1}{k!}\big(e_{\lambda}(t)-1\big)^{k}e_{\lambda}^{r}(t)\label{21} \\
&=\sum_{k=0}^{\infty}x^{k}k!\sum_{n=k}^{\infty}{n+r \brace k+r}_{r,\lambda}\frac{t^{n}}{n!} \nonumber \\
&=\sum_{n=0}^{\infty}\sum_{k=0}^{n}x^{k}k!{n+r\brace k+r}_{r,\lambda}\frac{t^{n}}{n!}.\nonumber
\end{align}
From \eqref{20} and \eqref{21}, we note that 
\begin{equation}
F_{n,\lambda}(x|r)=\sum_{k=0}^{n}x^{k}k!{n+r \brace k+r}_{r,\lambda}.\label{22}	
\end{equation}
Therefore, by Theorem 2 and \eqref{22}, we obtain the following theorem. 
\begin{theorem}
For $n\ge 0$, we have 
\begin{displaymath}
x^{n}n!=\sum_{k=0}^{n}(-1)^{n-k}{n+r \brack k+r}_{r,\lambda}F_{k,\lambda}(x|r). 
\end{displaymath}
\end{theorem}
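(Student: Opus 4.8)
The plan is to recognize Theorem 5 as an immediate instance of the inverse relation in Theorem 2, with the two sequences read off from equation \eqref{22}. Concretely, I would fix $x$ (and $\lambda$, $r$) and define $a_{k,\lambda}=x^{k}k!$ and $b_{k,\lambda}=F_{k,\lambda}(x|r)$ for $k\ge 0$. The identity \eqref{22} then states precisely that
\begin{displaymath}
b_{n,\lambda}=\sum_{k=0}^{n}{n+r \brace k+r}_{r,\lambda}a_{k,\lambda}
\end{displaymath}
for all $n\ge 0$, which is the right-hand side of the equivalence in Theorem 2.

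Next I would invoke the $(\Longleftarrow)$ direction of Theorem 2, which yields
\begin{displaymath}
a_{n,\lambda}=\sum_{k=0}^{n}(-1)^{n-k}{n+r \brack k+r}_{r,\lambda}b_{k,\lambda}.
\end{displaymath}
Substituting back $a_{n,\lambda}=x^{n}n!$ and $b_{k,\lambda}=F_{k,\lambda}(x|r)$ gives exactly the claimed identity
\begin{displaymath}
x^{n}n!=\sum_{k=0}^{n}(-1)^{n-k}{n+r \brack k+r}_{r,\lambda}F_{k,\lambda}(x|r).
\end{displaymath}
Alternatively, one can avoid citing Theorem 2 and argue directly by substituting \eqref{22} into the right-hand side and swapping the order of summation, then applying the first orthogonality relation of Theorem 1, $\sum_{k=j}^{n}(-1)^{n-k}{n+r \brack k+r}_{r,\lambda}{k+r \brace j+r}_{r,\lambda}=\delta_{n,j}$; this collapses the double sum to the single term $j=n$, namely $x^{n}n!$.

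Since the argument is a one-line specialization of a result already proved in the excerpt, there is essentially no obstacle. The only point worth a sentence of care is that Theorem 2 is stated for sequences depending on the single parameter $\lambda$, while here the sequences also depend on $x$; this is harmless, as $x$ is merely a fixed parameter throughout (the inverse relation holds for each fixed $x$), so \eqref{22} and Theorem 2 apply verbatim. I would therefore present the proof as the short substitution above, perhaps noting the orthogonality-based variant as an aside.
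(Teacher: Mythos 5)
Your proposal is correct and matches the paper's own argument: the paper derives this theorem precisely by combining equation \eqref{22} with the inverse relation of Theorem 2, exactly as you do with $a_{k,\lambda}=x^{k}k!$ and $b_{k,\lambda}=F_{k,\lambda}(x|r)$. Your remark that $x$ is a harmless fixed parameter, and your alternative via the orthogonality relation, are both sound but add nothing beyond the paper's one-line deduction.
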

In [3], Carlitz introduced the degenerate Euler polynomials $\mathcal{E}_{n,\lambda}(x)$ which are given by 
\begin{equation}
\frac{2}{e_{\lambda}(t)+1}e_{\lambda}^{x}(t)=\sum_{n=0}^{\infty}\mathcal{E}_{n,\lambda}(x)\frac{t^{n}}{n!}.\label{23}	
\end{equation}
By \eqref{20}, we get 
\begin{align}
\sum_{n=0}^{\infty}F_{n,\lambda}\big(-\frac{1}{2}\ \big|\ r\big)\frac{t^{n}}{n!}&=\frac{1}{1+\frac{1}{2}(e_{\lambda}(t)-1)}e_{\lambda}^{r}(t) \label{24} \\
	&=\frac{2}{e_{\lambda}(t)+1}e_{\lambda}^{r}(t)=\sum_{n=0}^{\infty}\mathcal{E}_{n,\lambda}(r)\frac{t^{n}}{n!}.\nonumber
\end{align}
Thus, we have 
\begin{displaymath}
\mathcal{E}_{n,\lambda}(r)=F_{n,\lambda}\big(-\frac{1}{2}\ \big|\ r\big)=\sum_{k=0}^{n}\big(-\frac{1}{2}\big)^{k}k!{n+r \brace k+r}_{r,\lambda}.
\end{displaymath}
\begin{theorem}
For $n\ge 0$, we have 
\begin{displaymath}
\mathcal{E}_{n,\lambda}(r)=\sum_{k=0}^{n}\big(-\frac{1}{2}\big)^{k}k!{n+r \brace k+r}_{r,\lambda}.
\end{displaymath}	
\end{theorem}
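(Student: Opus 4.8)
The plan is to realize the left side $\mathcal{E}_{n,\lambda}(r)$ as a special value of the degenerate two variable Fubini polynomials and then to apply the explicit formula already recorded in \eqref{22}. The whole argument is a chain of generating-function identifications, so there is essentially no deep obstacle; the only point needing a moment's care is an elementary algebraic simplification of the Fubini kernel.

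First I would take the defining generating function \eqref{20} of $F_{n,\lambda}(x\,|\,y)$ and specialize to $x=-\tfrac12$, $y=r$. The denominator on the right-hand side then becomes $1-\big(-\tfrac12\big)\big(e_{\lambda}(t)-1\big)=1+\tfrac12\big(e_{\lambda}(t)-1\big)=\tfrac12\big(e_{\lambda}(t)+1\big)$, so the right-hand side collapses to $\dfrac{2}{e_{\lambda}(t)+1}e_{\lambda}^{r}(t)$. By \eqref{23} with $x=r$, this is precisely $\sum_{n=0}^{\infty}\mathcal{E}_{n,\lambda}(r)\frac{t^{n}}{n!}$. Comparing coefficients of $t^{n}/n!$ on the two sides gives $\mathcal{E}_{n,\lambda}(r)=F_{n,\lambda}\big(-\tfrac12\,\big|\,r\big)$.

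Next I would substitute $x=-\tfrac12$ into the already established identity \eqref{22}, obtaining $F_{n,\lambda}\big(-\tfrac12\,\big|\,r\big)=\sum_{k=0}^{n}\big(-\tfrac12\big)^{k}k!\,{n+r\brace k+r}_{r,\lambda}$. Chaining this with the equality from the previous step yields the asserted formula. As noted, the only mildly delicate step is the simplification $1+\tfrac12(e_{\lambda}(t)-1)=\tfrac12(e_{\lambda}(t)+1)$ that turns the Fubini kernel into the degenerate Euler kernel; once this is in hand, the identification with \eqref{23} and the appeal to \eqref{22} are both immediate, and one could alternatively phrase the same proof purely in terms of the coefficient identity $\mathcal{E}_{n,\lambda}(r)=F_{n,\lambda}(-\tfrac12\,|\,r)$ together with \eqref{22} without ever writing generating functions.
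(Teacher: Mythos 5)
Your argument is correct and is essentially identical to the paper's own proof: the paper likewise specializes the Fubini generating function \eqref{20} at $x=-\tfrac12$, $y=r$, simplifies $1+\tfrac12(e_{\lambda}(t)-1)=\tfrac12(e_{\lambda}(t)+1)$ to recognize the degenerate Euler kernel \eqref{23}, concludes $\mathcal{E}_{n,\lambda}(r)=F_{n,\lambda}(-\tfrac12\,|\,r)$, and then invokes \eqref{22}. No differences worth noting.
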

From Theorem 2 and Theorem 5, we note that 
\begin{equation}
\big(-\frac{1}{2}\big)^{n}n!=\sum_{k=0}^{n}(-1)^{n-k}{n+r \brack k+r}_{r,\lambda}\mathcal{E}_{k,\lambda}(r).\label{25}
\end{equation}
Therefore, by \eqref{25}, we obtain the following theorem.\begin{theorem}
For $n\ge 0$, we have 
\begin{displaymath}
	\frac{n!}{2^{n}}=\sum_{k=0}^{n}(-1)^{k}{n+r \brack k+r}_{r,\lambda}\mathcal{E}_{k,\lambda}(r). 
\end{displaymath}	
\end{theorem}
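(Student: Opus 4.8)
The plan is to read off the identity from the inverse relation of Theorem 2 applied to the closed form for $\mathcal{E}_{n,\lambda}(r)$ recorded in Theorem 5, followed by a single sign normalization. First I would set
\begin{displaymath}
a_{n,\lambda}=\big(-\tfrac{1}{2}\big)^{n}n!,\qquad b_{n,\lambda}=\mathcal{E}_{n,\lambda}(r).
\end{displaymath}
Theorem 5 says exactly that $b_{n,\lambda}=\sum_{k=0}^{n}{n+r\brace k+r}_{r,\lambda}a_{k,\lambda}$, so the $(\Longleftarrow)$ implication of Theorem 2 applies verbatim and produces
\begin{displaymath}
\big(-\tfrac{1}{2}\big)^{n}n!=a_{n,\lambda}=\sum_{k=0}^{n}(-1)^{n-k}{n+r\brack k+r}_{r,\lambda}\mathcal{E}_{k,\lambda}(r),
\end{displaymath}
which is precisely \eqref{25}.

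The second and final step is to multiply \eqref{25} by $(-1)^{n}$. On the left-hand side, $(-1)^{n}\big(-\tfrac{1}{2}\big)^{n}n!=\dfrac{n!}{2^{n}}$. On the right-hand side, the factor attached to the $k$th term becomes $(-1)^{n}(-1)^{n-k}=(-1)^{2n-k}=(-1)^{k}$, so the sum turns into $\sum_{k=0}^{n}(-1)^{k}{n+r\brack k+r}_{r,\lambda}\mathcal{E}_{k,\lambda}(r)$. This yields the asserted equality $\dfrac{n!}{2^{n}}=\sum_{k=0}^{n}(-1)^{k}{n+r\brack k+r}_{r,\lambda}\mathcal{E}_{k,\lambda}(r)$, completing the proof.

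There is no real obstacle here: once \eqref{25} is in hand the argument is pure bookkeeping. The only points that warrant a little care are (i) checking that the pair $(a_{n,\lambda},b_{n,\lambda})$ is assigned so that it genuinely satisfies the hypothesis of the backward direction of the inverse relation in Theorem 2 (the $r$-Stirling array of the second kind multiplying the sequence $a_{k,\lambda}$), and (ii) tracking the three sign factors $(-1)^{n}$, $(-1)^{n-k}$ and $\big(-\tfrac{1}{2}\big)^{n}$ so that they collapse to $(-1)^{k}$ and $2^{-n}$ as claimed. If one prefers, the theorem could alternatively be obtained directly from Theorem 5 by the same substitution into the $(\Longleftarrow)$ implication and then the sign normalization, bypassing the explicit citation of \eqref{25}; the computation is identical.
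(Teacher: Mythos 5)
Your proposal is correct and follows exactly the paper's own route: Theorem 5 is read as the hypothesis $b_{n,\lambda}=\sum_{k=0}^{n}{n+r\brace k+r}_{r,\lambda}a_{k,\lambda}$ of the inverse relation in Theorem 2 with $a_{n,\lambda}=(-\tfrac12)^{n}n!$ and $b_{n,\lambda}=\mathcal{E}_{n,\lambda}(r)$, producing \eqref{25}, and then multiplication by $(-1)^{n}$ normalizes the signs. The sign bookkeeping $(-1)^{n}(-1)^{n-k}=(-1)^{k}$ and $(-1)^{n}(-\tfrac12)^{n}=2^{-n}$ is carried out correctly.
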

Recently, Kim-Kim introduced the degenerate polylogarithm function of index $k$ which is given by
\begin{equation}
\mathrm{Li}_{k,\lambda}(x)=\sum_{n=1}^{\infty}\frac{(-\lambda)^{n-1}(1)_{n,1/\lambda}}{(n-1)!n^{k}}x^{n},\quad (k\in\mathbb{Z}),\quad (\mathrm{see}\ [6]).\label{26}
\end{equation}
In [9], the degenerate poly-Bernoulli polynomials of index $k$ are defined by 
\begin{equation}
\frac{\mathrm{Li}_{k,\lambda}(1-e_{\lambda}(-t))}{1-e_{\lambda}(-t)}e_{\lambda}^{-x}(-t)=\sum_{n=0}^{\infty}\beta_{n,\lambda}^{(k)}(x)\frac{t^{n}}{n!}.\label{27}
\end{equation}
For $p\in\mathbb{Z}$, we have 
\begin{equation}
\beta_{n,\lambda}^{(p)}(-r)=(-1)^{n}\sum_{k=0}^{n}\frac{\lambda^{k}(1)_{k+1,1/\lambda}}{(k+1)^{p}}{n+r \brace k+r}_{r,\lambda}.\label{28}	
\end{equation}
From Theorem 2 and \eqref{2}, we have 
\begin{align}
\frac{\lambda^{n}(1)_{n+1,1/\lambda}}{(n+1)^{p}}&=\sum_{k=0}^{n}(-1)^{n-k}{n+r \brack k+r}_{r,\lambda}(-1)^{k}\beta_{k,\lambda}^{(p)}(-r)\label{29} \\
&=\sum_{k=0}^{n}(-1)^{n}{n+r \brack k+r}_{r,\lambda}\beta_{k,\lambda}^{(p)}(-r). \nonumber
\end{align}
Therefore, by \eqref{29}, we obtain the following theorem. 
\begin{theorem}
For $n\ge 0$, we have 
\begin{displaymath}
	\frac{(-\lambda)^{n}(1)_{n+1,1/\lambda}}{(n+1)^{p}}=\sum_{k=0}^{n}{n+r \brack k+r}_{r,\lambda}\beta_{k,\lambda}^{(p)}(-r). 
\end{displaymath}	
\end{theorem}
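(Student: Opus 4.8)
The plan is to obtain Theorem 7 as a direct application of the inverse relation in Theorem 2 to the expansion \eqref{28} of the degenerate poly-Bernoulli polynomial $\beta_{n,\lambda}^{(p)}(-r)$; the only substantive work is first to verify \eqref{28} itself.

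To establish \eqref{28}, I would start from \eqref{27} with $x=-r$, which gives $\frac{\mathrm{Li}_{p,\lambda}(1-e_{\lambda}(-t))}{1-e_{\lambda}(-t)}\,e_{\lambda}^{r}(-t)=\sum_{n=0}^{\infty}\beta_{n,\lambda}^{(p)}(-r)\frac{t^{n}}{n!}$. From \eqref{26}, dividing by $x$ and shifting the summation index shows $\frac{\mathrm{Li}_{p,\lambda}(x)}{x}=\sum_{m=0}^{\infty}\frac{(-\lambda)^{m}(1)_{m+1,1/\lambda}}{m!\,(m+1)^{p}}x^{m}$. I would then substitute $x=1-e_{\lambda}(-t)=-(e_{\lambda}(-t)-1)$ into this series; the factor $(-1)^{m}$ produced by the substitution combines with the $(-\lambda)^{m}$ to leave $\lambda^{m}$, giving $\sum_{m=0}^{\infty}\frac{\lambda^{m}(1)_{m+1,1/\lambda}}{(m+1)^{p}}\cdot\frac{1}{m!}\big(e_{\lambda}(-t)-1\big)^{m}$. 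Multiplying by $e_{\lambda}^{r}(-t)$ and invoking \eqref{17} with $t$ replaced by $-t$, namely $e_{\lambda}^{r}(-t)\frac{1}{m!}\big(e_{\lambda}(-t)-1\big)^{m}=\sum_{n=m}^{\infty}(-1)^{n}{n+r\brace m+r}_{r,\lambda}\frac{t^{n}}{n!}$, and reading off the coefficient of $t^{n}/n!$ yields precisely \eqref{28}.

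With \eqref{28} in hand, I would put $a_{k,\lambda}=\dfrac{\lambda^{k}(1)_{k+1,1/\lambda}}{(k+1)^{p}}$ and $b_{k,\lambda}=(-1)^{k}\beta_{k,\lambda}^{(p)}(-r)$, so that \eqref{28} reads $b_{n,\lambda}=\sum_{k=0}^{n}{n+r\brace k+r}_{r,\lambda}a_{k,\lambda}$. The $(\Longleftarrow)$ direction of Theorem 2 then gives $a_{n,\lambda}=\sum_{k=0}^{n}(-1)^{n-k}{n+r\brack k+r}_{r,\lambda}b_{k,\lambda}$, i.e. $\dfrac{\lambda^{n}(1)_{n+1,1/\lambda}}{(n+1)^{p}}=\sum_{k=0}^{n}(-1)^{n-k}(-1)^{k}{n+r\brack k+r}_{r,\lambda}\beta_{k,\lambda}^{(p)}(-r)=(-1)^{n}\sum_{k=0}^{n}{n+r\brack k+r}_{r,\lambda}\beta_{k,\lambda}^{(p)}(-r)$, which is \eqref{29}. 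Multiplying by $(-1)^{n}$ and using $(-1)^{n}\lambda^{n}=(-\lambda)^{n}$ then gives the stated formula.

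The theorem is essentially immediate from the inverse relation, so the main obstacle is the bookkeeping in the derivation of \eqref{28}: one must correctly combine the three sign contributions — the $(-\lambda)^{m}$ in the polylogarithm series, the $(-1)^{m}$ from writing $1-e_{\lambda}(-t)=-(e_{\lambda}(-t)-1)$, and the $(-1)^{n}$ from the substitution $t\mapsto -t$ in \eqref{17}. No analytic or combinatorial subtleties arise, since every manipulation is a formal power-series identity.
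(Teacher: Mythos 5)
Your proposal is correct and follows essentially the same route as the paper: apply the $(\Longleftarrow)$ direction of Theorem~2 to the expansion \eqref{28} with $a_{k,\lambda}=\lambda^{k}(1)_{k+1,1/\lambda}/(k+1)^{p}$ and $b_{k,\lambda}=(-1)^{k}\beta_{k,\lambda}^{(p)}(-r)$, obtaining \eqref{29} and then multiplying by $(-1)^{n}$. The only difference is that you also supply a (correct) derivation of \eqref{28} from \eqref{26}, \eqref{27} and \eqref{17}, whereas the paper simply asserts that identity.
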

\section{Further remark}
The degenerate Bernoulli polynomials of the second kind $b_{n,\lambda}(x)$ are defined by 
\begin{equation}
\frac{t}{\log_{\lambda}(1+t)}(1+t)^{x}=\sum_{n=0}^{\infty}b_{n,\lambda}(x)\frac{t^{n}}{n!}.\label{30}
\end{equation}
In view of \eqref{7}, we consider the degenerate hyperharmonic numbers which are defined by 
\begin{equation}
H_{n,\lambda}^{(1)}=H_{n,\lambda},\quad H_{n,\lambda}^{(r)}=\sum_{k=1}^{n}H_{k,\lambda}^{(r-1)},\,\,(n \ge 1, r \ge 2),\quad H_{0,\lambda}^{(r)}=0,\,\,(r \ge 2).\label{31}	
\end{equation}
Assume that $r \ge 2$ and that $-\frac{1}{(1-t)^{r-1}}\log_{\lambda}(1-t)=\sum_{n=1}^{\infty}H_{n,\lambda}^{(r-1)}t^{n}$.
From \eqref{31}, we note that 
\begin{align}
\sum_{n=1}^{\infty}H_{n,\lambda}^{(r)}t^{n}&=\sum_{n=1}^{\infty}\sum_{k=1}^{n}H_{k,\lambda}^{(r-1)}t^{n}=\frac{1}{1-t}\sum_{k=1}^{\infty}H_{k,\lambda}^{(r-1)}t^{k}\label{32} \\
&=\frac{1}{1-t}\bigg(-\frac{1}{(1-t)^{r-1}}\log_{\lambda}(1-t)\bigg)=-\frac{1}{(1-t)^{r}}\log_{\lambda}(1-t). \nonumber	
\end{align}
Therefore, by the induction step in \eqref{32}, we obtain the following theorem.
\begin{proposition}
Let $r$ be a positive integer. Then we have 
\begin{displaymath}
-\frac{1}{(1-t)^{r}}\log_{\lambda}(1-t)=\sum_{n=1}^{\infty}H_{n,\lambda}^{(r)}t^{n}.
\end{displaymath}
\end{proposition}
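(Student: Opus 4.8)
The plan is to prove the identity by induction on $r$, essentially reproducing the chain of equalities already displayed in \eqref{32}. For the base case $r=1$, the asserted formula reads
\[
-\frac{1}{1-t}\log_{\lambda}(1-t)=\sum_{n=1}^{\infty}H_{n,\lambda}^{(1)}t^{n},
\]
which is nothing but \eqref{7}, since $H_{n,\lambda}^{(1)}=H_{n,\lambda}$ by the convention in \eqref{31}. So there is nothing to check at the bottom of the induction.

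For the inductive step I would fix $r\ge 2$ and assume the identity at level $r-1$, that is, $-\frac{1}{(1-t)^{r-1}}\log_{\lambda}(1-t)=\sum_{n=1}^{\infty}H_{n,\lambda}^{(r-1)}t^{n}$ as formal power series in $t$. The one elementary fact to invoke is that multiplication by $\frac{1}{1-t}=\sum_{m\ge 0}t^{m}$ turns a power series into the power series of its partial sums: for any sequence $(a_{k})_{k\ge 1}$,
\[
\frac{1}{1-t}\sum_{k=1}^{\infty}a_{k}t^{k}=\sum_{n=1}^{\infty}\Big(\sum_{k=1}^{n}a_{k}\Big)t^{n}.
\]
Taking $a_{k}=H_{k,\lambda}^{(r-1)}$ and using the recurrence $H_{n,\lambda}^{(r)}=\sum_{k=1}^{n}H_{k,\lambda}^{(r-1)}$ from \eqref{31}, the right-hand side becomes $\sum_{n=1}^{\infty}H_{n,\lambda}^{(r)}t^{n}$, while by the induction hypothesis the left-hand side equals $\frac{1}{1-t}\big(-\frac{1}{(1-t)^{r-1}}\log_{\lambda}(1-t)\big)=-\frac{1}{(1-t)^{r}}\log_{\lambda}(1-t)$. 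Comparing the two evaluations of $\frac{1}{1-t}\sum_{k\ge 1}H_{k,\lambda}^{(r-1)}t^{k}$ then gives the identity at level $r$ and closes the induction.

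All manipulations take place among formal power series in $t$ with vanishing constant term (note $\log_{\lambda}(1-t)$ vanishes at $t=0$, consistent with $H_{0,\lambda}^{(r)}=0$ for $r\ge 2$ in \eqref{31}), so no analytic convergence questions intervene and the two sides may be compared coefficientwise. I do not expect any genuine obstacle; the only steps meriting an explicit line are the partial-summation identity for $\frac{1}{1-t}$, which is immediate from the Cauchy product, and the bookkeeping that $H_{n,\lambda}^{(1)}=H_{n,\lambda}$ so that \eqref{7} indeed supplies the base case.
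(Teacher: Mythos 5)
Your proposal is correct and is essentially identical to the paper's own argument: the paper also proceeds by induction on $r$, taking \eqref{7} as the base case and using the display \eqref{32} (multiplication by $\frac{1}{1-t}$ producing partial sums, combined with the recurrence in \eqref{31}) as the inductive step. Your write-up merely makes the induction structure and the Cauchy-product justification more explicit.
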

Now, by using \eqref{30} and Proposition 8, we observe that 
\begin{align}
t&=\frac{-t}{\log_{\lambda}(1-t)}(1-t)^{r}\frac{-\log_{\lambda}(1-t)}{(1-t)^{r}}\label{33}\\
&=\sum_{k=0}^{\infty}\frac{(-1)^{k}}{k!}b_{k,\lambda}(r)t^{k}\sum_{l=0}^{\infty}H_{l,\lambda}^{(r)}t^{l} \nonumber 	\\
&=\sum_{n=0}^{\infty}\bigg(\sum_{k=0}^{n}\frac{(-1)^{k}}{k!}b_{k,\lambda}(r)H_{n-k,\lambda}^{(r)}\bigg)t^{n}.\nonumber
\end{align}
Comparing the coefficients on both sides of \eqref{33}, we have 
\begin{equation}
\sum_{k=0}^{n}\frac{(-1)^{k}}{k!}b_{k,\lambda}(r)H_{n-k,\lambda}^{(r)}=\delta_{n,1}.
\end{equation}

\section{Conclusion}

\vspace{0.1in}

The unsigned degenerate $r$-Stirling numbers of the first kind ${n \brack k}_{r,\lambda}$ are degenerate versions of the unsigned $r$-Stirling numbers of the first kind ${n \brack k}_{r}$ and the degenerate $r$-Stirling numbers of the second kind ${n \brace k}_{r,\lambda}$ are degenerate versions of the $r$-Stirling numbers of the second kind ${n \brace k}_{r}$. We derived from the inverse relations for the degenerate $r$-Stirling numbers some identities involving such numbers and some special numbers which are given by the evaluations at $r$ of the fully degenerate Bernoulli polynomials, the degenerate two variable Fubini polynomials, the degenerate Euler polynomials and the degenerate poly-Bernoulli polynomials. \par
It is one of our future projects to continue to explore degenerate versions of some special numbers and polynomials and their applications not only in mathematics but also in other disciplines like statistics, physics, engineering and social sciences.

\vspace{0.1in}

\end{document}